\newtheorem{lemma}{Lemma}
\newtheorem{theorem}{Theorem}
\newtheorem{corollary}{Corollary}
\newtheorem{conjecture}{Conjecture}
\newcommand{\setof}[1]{\left\{{#1}\right\}}
\newcommand{\cset}[2]{\setof{#1\,:\,#2}}
\newcommand{\ivcc}[2]{\left[#1,#2\right]}
\newcommand{\ivco}[2]{\left[#1,#2\right)}
\newcommand{\ivoc}[2]{\left(#1,#2\right]}
\newcommand{\Nn}{{\mathbb N}}
\newcommand{\Rr}{{\mathbb R}}
\newcommand{\Snk}{S_{n, k}\,}
\newcommand{\Snkp}[1]{S^{#1}_{n, k}\,}
\newcommand{\Snkpm}{\Snkp{m}}
\newcommand{\diffk}{\Delta_{k}}
\newcommand{\diffkm}{\Delta_{k-1}}
\newcommand{\bspk}[1]{N_{{#1}, k}}
\newcommand{\bspkm}[2]{N_{{{#1}}, {k-#2}}}
\newcommand{\abs}[1]{\left|#1\right|}
\newcommand{\norm}[1]{\left\Vert#1\right\Vert}
\newcommand{\normi}[1]{\left\Vert#1\right\Vert_\infty}
\newcommand{\normop}[1]{\left\Vert#1\right\Vert_{op}}
\newcommand{\spacecf}{C(\ivcc{0}{1})}
\newcommand{\spacedf}[1]{C^{{#1}}(\ivcc{0}{1})}
\newcommand{\spacepp}{\mathcal{S}(n, k)}
\newcommand{\spaceppk}[1]{\mathcal{S}(n, {#1})}
\journal{Journal of Approximation Theory}
\renewcommand{\headrulewidth}{0.0pt}
\begin{document}

\begin{frontmatter}
  \title{A lower bound for the uniform Schoenberg operator}

  \author[up]{Johannes~Nagler}
  \ead{johannes.nagler@uni-passau.de}

  \author[ua]{Uwe~K\"ahler}
  \ead{ukaehler@ua.pt}

  \address[up]{Fakult\"at f\"ur Informatik und Mathematik, Universit\"at Passau, Germany}
  \address[ua]{CIDMA -- Center for R\&D in Mathematics and Applications, Universidade de Aveiro, Portugal}

  \begin{abstract}
    We present an estimate for the lower bound for the Schoenberg operator with equidistant knots
    in terms of the second order modulus of smoothness. We investigate the behaviour of iterates
    of the Schoenberg operator and in addition, we show an upper bound of the second order derivative of these iterates.
    Finally, we prove the equivalence between the approximation error and the second order modulus of smoothness.
  \end{abstract}

  \begin{keyword}
    spline approximation \sep Schoenberg operator \sep iterates \sep inverse theorem
  \end{keyword}
\end{frontmatter}
\fancypagestyle{pprintTitle}{%
\lhead{} \chead{}\rhead{}
\lfoot{}\cfoot{}\rfoot{{\footnotesize\itshape \hfill Preprint, \today}}
\renewcommand{\headrulewidth}{0.0pt}
}

\section{Introduction}
L. Beutel et al. stated in \cite{Beutel:2002} an interesting conjecture about the
equivalence of the approximation error of the Schoenberg operator on $\ivcc{0}{1}$ and
the second order modulus of smoothness. We prove that this conjecture holds true
for the uniform Schoenberg operator if the degree of the splines is
fixed and the mesh gauge tends to zero. To this end, we
characterize the behaviour of the iterates of the Schoenberg
operator. Related to our result is the work of Zapryanova et al. \cite{Zapryanova:2012},
who proved an inverse theorem for the uniform Schoenberg
operator using the Ditzian-Totik modulus of smoothness. In contrast to
their result, we give a direct lower bound.

More specifically, we show that for $f \in \spacecf$ we have the uniform estimate
\begin{equation*}
 \omega_2(f, \delta) \leq 5 \cdot \normi{ f-\Snk f},
\end{equation*}
where $\omega_2(f,\delta)$ is the classical modulus of smoothness.

\subsection{The Schoenberg operator}
For integers $n, k > 0$, we consider the equidistant knots $\{x_j = \frac{j}{n}\}_{j=0}^{n}$ as a partition of $\ivcc{0}{1}$. 
We extend this knot sequence by setting
\begin{equation*}
  x_{-k} = \cdots = x_0 = 0 < x_1 < \ldots < x_n = \cdots = x_{n+k} = 1.
\end{equation*}
For $f \in \spacecf$, the variation-diminishing spline operator of degree $k$ with respect to
the knots $\{x_j\}_{j=-k}^{n+k}$ is then defined by
\begin{align*}
  \Snk f(x) &= \sum_{j=-k}^{n-1}f(\xi_{j,k})\bspk{j}(x),\quad 0 \leq x < 1,\\
  \Snk f(1) &= \lim_{y \nearrow 1} \Snk f(y)
\end{align*}
with the nodes
\begin{equation*}
  \xi_{j,k} := \frac{x_{j+1} + \cdots + x_{j+k}}{k},\quad -k \leq j \leq n-1,
\end{equation*}
and the normalized B-splines 
\begin{equation*}
  \bspk{j}(x) := (x_{j+k+1} - x_j)[x_j,\ldots,x_{j+k+1}](\cdot - x)_+^k.
\end{equation*}

This operator was introduced by Schoenberg in 1959 as a generalization of the Bernstein operator see, e.g., \cite{Curry:1966, Marsden:1970}.
The normalized B-splines form a partition of the unity
\begin{equation}
  \label{eq:partition_unity}
  \sum_{j=-k}^{n-1}\bspk{j}(x) = 1,
\end{equation}
and the Schoenberg operator can reproduce linear functions, i.e.,
\begin{equation}
  \label{eq:reproduce_linear}
  \sum_{j=-k}^{n-1}\xi_{j,k}\bspk{j}(x) = x,
\end{equation}
due to the chosen Greville nodes. A comprehensive overview of direct inequalities for this operator can be found in \cite{Beutel:2002}.

\subsection{Notation}
Throughout this paper, we will consider the Banach space $\spacecf$, 
i.e., the space of real-valued continuous functions on the intervall $\ivcc{0}{1}$ endowed with the supremum norm $\normi{\cdot}$,
\begin{equation*}
  \norm{f}_\infty = \sup\cset{\abs{f(x)}}{x \in \ivcc{0}{1}}, \qquad f\in \spacecf.
\end{equation*}
The space of bounded linear operators on $\spacecf$ will be denoted by $\mathcal{B}(\spacecf)$ equipped with the usual operator norm $\normop{\cdot}$. As a $n+k$-dimensional subspace of $\spacecf$, we denote by $\spacepp$ the spline space of degree $k$ with respect to the knot sequence $\setof{x_j}_{j=-k}^{n+k}$,
\begin{equation*}
  \spacepp = \cset{\sum_{j=-k}^{n-1}c_j \bspk{j}}{c_j \in \Rr,\ j \in \setof{-k,\ldots,n-1}} \subset \spacedf{k-1}.
\end{equation*}
Since $\spacepp$ is finite-dimensional, $\spacepp$ is a Banach space with the inherited norm $\normi{\cdot}$.
For more information on spline spaces see, e.g., \cite{deBoor:1987}.
For $f \in \spacecf$ and points $x_0,\ldots,x_{k} \in \ivcc{0}{1}$, the divided difference $[x_0,\ldots,x_{k}]f$ is defined to be the coefficient
of $x^k$ in the unique polynomial of degree $k$ or less that interpolates $f(x)$ at the points $x_0,\ldots,x_{k}$.

\section{The iterates of the Schoenberg operator}
In the following, we discuss some basic properties of the iterates of the Schoenberg operator. For $m \in \Nn$, 
we define 
\begin{equation*}
  (\Snkpm f)(x) = (\Snkp{m-1}(\Snk f))(x)\qquad\text{for all } x \in \ivcc{0}{1}.
\end{equation*}

\begin{lemma}
  \label{lemma:iterates}
  We can write the $m$-th iterate of the Schoenberg operator as
  \begin{align*}
    \Snkpm f(x) &= \Snkp{m-1}\left(\sum_{j=-k}^{n-1}f(\xi_{j,k})N_{j,k}(x)\right)\\
    & = \sum_{j_1,\ldots, j_m=-k}^{n-1}f(\xi_{j_1,k})N_{j_1,k}(\xi_{j_2,k})\cdots N_{j_{m-1},k}(\xi_{j_m,k}) N_{j_m,k}(x). 
  \end{align*}
\end{lemma}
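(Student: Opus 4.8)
The plan is to proceed by induction on $m$. The first displayed equality is immediate: it is just the recursive definition $\Snkpm f = \Snkp{m-1}(\Snk f)$ combined with the single-step formula $\Snk f = \sum_{j=-k}^{n-1} f(\xi_{j,k}) N_{j,k}$. Hence all the content sits in the second equality, the multi-index expansion, which I would prove by induction.

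For the base case $m=1$ the claimed multi-index sum has only the index $j_1$ and reduces to $\sum_{j_1=-k}^{n-1} f(\xi_{j_1,k}) N_{j_1,k}(x)$, which is by definition $\Snk f(x)$. For the inductive step, assume the formula holds for the $(m-1)$-st iterate. Using $\Snkpm f = \Snkp{m-1}(\Snk f)$, I would set $g := \Snk f = \sum_{j_1=-k}^{n-1} f(\xi_{j_1,k}) N_{j_1,k}$ (a spline, hence a legitimate argument for $\Snkp{m-1}$) and apply the induction hypothesis to $g$, so that $\Snkp{m-1} g(x) = \sum_{j_2,\ldots,j_m=-k}^{n-1} g(\xi_{j_2,k}) N_{j_2,k}(\xi_{j_3,k})\cdots N_{j_{m-1},k}(\xi_{j_m,k}) N_{j_m,k}(x)$. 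It then remains to substitute the value of $g$ at the node $\xi_{j_2,k}$, namely $g(\xi_{j_2,k}) = \sum_{j_1=-k}^{n-1} f(\xi_{j_1,k}) N_{j_1,k}(\xi_{j_2,k})$, and to pull the finite sum over $j_1$ to the front using linearity of the operators. This introduces the new summation index $j_1$ together with the two factors $f(\xi_{j_1,k})$ and $N_{j_1,k}(\xi_{j_2,k})$, extending the telescoping product at its front and yielding exactly the target expression for $m$.

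The only genuine obstacle is bookkeeping: one must align the multi-indices so that the single-step expansion of $g(\xi_{j_2,k})$ prepends precisely one more link $N_{j_1,k}(\xi_{j_2,k})$ to the chain $N_{j_2,k}(\xi_{j_3,k})\cdots N_{j_{m-1},k}(\xi_{j_m,k}) N_{j_m,k}(x)$, and so that the replacement of $f$ by $g$ in the leading argument slot is matched correctly. No analytic input is needed beyond linearity and the defining formula for $\Snk$; the argument is purely combinatorial, so I expect the write-up to be short.
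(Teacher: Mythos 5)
Your proposal is correct and is exactly the route the paper takes: its proof consists of the single line ``Induction over $m$,'' and your write-up simply fills in the base case, the application of the inductive hypothesis to $g = \Snk f$, and the linearity bookkeeping that prepends the factor $N_{j_1,k}(\xi_{j_2,k})$ to the product chain. Nothing further is needed.
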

\begin{proof}
  Induction over $m$.
\end{proof}

\subsection{The first and second derivative of the iterates}
In this section, we consider the derivatives and give explicit representations. 
For that, we define a discrete backward difference operator $\Delta_l$ by
\begin{equation*}
  \Delta_l f(\xi_{j,k}) := \frac{f(\xi_{j,k}) - f(\xi_{j-1,k})}{\xi_{j,l} - \xi_{j-1, l}}.
\end{equation*}
With this, we can state:

\begin{lemma}
  \label{lemma:derivative_spline}
  The following properties hold for the derivatives of the Schoenberg operator:
  \begin{align*}
    D \Snk f &= S^+_{n,k-1}\, \diffk f
    \shortintertext{and}
    D^2 \Snk f &=  S^{++}_{n,k-2}\, \diffkm \diffk f,
  \end{align*}
  where $S^+_{n,k}f$ and $S^{++}_{n,k}$ are Schoenberg operators with shifted knots defined by
  \begin{equation*}
    S^+_{n,k} f = \sum_{j=-k}^{n-1}f(\xi_{j, k+1})\bspk{j}\qquad\text{and}\qquad 
    S^{++}_{n,k} f = \sum_{j=-k}^{n-1}f(\xi_{j, k+2})\bspk{j}.
  \end{equation*}
\end{lemma}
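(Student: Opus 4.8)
The plan is to differentiate the defining sum $\Snk f = \sum_{j=-k}^{n-1} f(\xi_{j,k})\bspk{j}$ term by term, using the classical differentiation formula for normalized B-splines,
\begin{equation*}
  D\bspk{j}(x) = k\left(\frac{\bspkm{j}{1}(x)}{x_{j+k}-x_j} - \frac{\bspkm{j+1}{1}(x)}{x_{j+k+1}-x_{j+1}}\right),
\end{equation*}
which lowers the degree by one. After substituting this I would reindex the second group of terms by $j \mapsto j-1$ (an Abel-type summation), so that every summand carries the same lower-degree B-spline $\bspkm{j}{1}$. The coefficient of $\bspkm{j}{1}(x)$ then becomes $k\,(f(\xi_{j,k}) - f(\xi_{j-1,k}))/(x_{j+k}-x_j)$.

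The step at which the definition of $\diffk$ pays off is the elementary identity $\xi_{j,k} - \xi_{j-1,k} = (x_{j+k}-x_j)/k$, obtained by telescoping the averages defining the Greville nodes. Hence $k/(x_{j+k}-x_j) = 1/(\xi_{j,k}-\xi_{j-1,k})$, and the coefficient collapses exactly to $\diffk f(\xi_{j,k})$. Recognizing $\sum_j \diffk f(\xi_{j,k})\,\bspkm{j}{1}$ as $S^+_{n,k-1}\diffk f$, i.e.\ the degree-$(k-1)$ Schoenberg operator evaluated at the node $\xi_{j,k}$, yields the first formula.

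For the second derivative I would repeat the same two moves on $D\Snk f = \sum_j \diffk f(\xi_{j,k})\,\bspkm{j}{1}$: differentiate each $\bspkm{j}{1}$ by the same B-spline rule (now with denominator $x_{j+k-1}-x_j$ and factor $k-1$), reindex, and invoke the analogous node identity $\xi_{j,k-1}-\xi_{j-1,k-1} = (x_{j+k-1}-x_j)/(k-1)$. This is precisely why $\diffkm$ differences the values $\diffk f(\xi_{j,k})$ but divides by $\xi_{j,k-1}-\xi_{j-1,k-1}$: the numerator is produced by the reindexing of the degree-$(k-1)$ coefficients, the denominator by differentiating a degree-$(k-1)$ B-spline. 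Collecting terms gives $\sum_j \diffkm\diffk f(\xi_{j,k})\,\bspkm{j}{2} = S^{++}_{n,k-2}\diffkm\diffk f$.

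The reindexing itself is routine; the step that requires genuine care is the boundary. Because of the coalescing knots $x_{-k}=\cdots=x_0=0$ and $x_n=\cdots=x_{n+k}=1$, several denominators $x_{j+k}-x_j$ degenerate at the ends of the summation range. I would have to verify that the B-splines multiplying those terms vanish identically (by the support convention $\bspk{j}\equiv 0$ when $x_{j+k+1}=x_j$), so that the Abel shift introduces no spurious boundary contributions and both formulas hold on all of $\ivcc{0}{1}$, including the endpoint value fixed by the one-sided limit.
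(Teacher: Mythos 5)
Your proof is correct, but it takes a more self-contained route than the paper. The paper's proof is a two-line citation: it quotes Marsden's representation
\begin{equation*}
  D\Snk f(x) = \sum_{j=1-k}^{n-1}\frac{f(\xi_{j,k}) - f(\xi_{j-1,k})}{\xi_{j,k} - \xi_{j-1,k}} \bspkm{j}{1}(x)
\end{equation*}
(and its analogue for $D^2$) directly from \cite{Marsden:1970} and then merely observes that the coefficients are, by definition, $\diffk f(\xi_{j,k})$ and $\diffkm\diffk f(\xi_{j,k})$. What you have done is prove Marsden's formula itself: term-by-term differentiation via the B-spline recurrence $D\bspk{j} = k\bigl(\bspkm{j}{1}/(x_{j+k}-x_j) - \bspkm{j+1}{1}/(x_{j+k+1}-x_{j+1})\bigr)$, the reindexing $j\mapsto j-1$, and the telescoping identity $\xi_{j,k}-\xi_{j-1,k} = (x_{j+k}-x_j)/k$, which is exactly the point where the backward difference $\Delta_l$ with its mixed nodes (values at $\xi_{j,k}$, denominator $\xi_{j,l}-\xi_{j-1,l}$) emerges naturally. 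Your attention to the coalescent boundary knots, where $x_{j+k}-x_j$ degenerates and the corresponding B-splines vanish identically, is a genuine issue that the paper's citation silently delegates to Marsden; making it explicit is what guarantees the summation range collapses correctly to $j \geq 1-k$ (resp.\ $j \geq 2-k$) with no spurious terms. The trade-off is the usual one: the paper's proof is shorter and leans on the literature, while yours is verifiable from the definitions alone and exposes why the denominator in $\diffkm$ uses the degree-$(k-1)$ Greville nodes.
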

\begin{proof}
  This lemma follows directly by the representation of the derivative, \cite{Marsden:1970}:
  \begin{align*}
    D\Snk f(x) &= \sum_{j=1-k}^{n-1}\frac{f(\xi_{j,k}) - f(\xi_{j-1,k})}{\xi_{j,k} - \xi_{j-1,k}} \bspkm{j}{1}(x),
    \intertext{and}
    D^2\Snk f(x) &= \sum_{j=2-k}^{n-1}\frac{\frac{f(\xi_{j,k}) - f(\xi_{j-1,k})}{\xi_{j,k} - \xi_{j-1,k}} -
    \frac{f(\xi_{j-1,k}) - f(\xi_{j-2,k})}{\xi_{j-1,k} - \xi_{j-2,k}}}{\xi_{j,k-1} - \xi_{j-1,k-1}} \bspkm{j}{1}(x).
  \end{align*}
  Applying the definition of the discrete backward difference operator $\Delta_l$ gives the required representation.
\end{proof}

Now we give an analogous representation for the iterates of the Schoenberg operator.
\begin{theorem}
  The first and the second derivative of the iterates of the Schoenberg operator have the following representation:
\begin{align*}
  \MoveEqLeft D \Snkpm  f(x) = \sum_{j_m = 1-k}^{n-1}
  \sum_{j_1,\ldots, j_{m-1}=-k}^{n-1} f(\xi_{j_1,k}) N_{j_1, k}(\xi_{j_2, k}) \cdot \ldots \cdot 
  N_{j_{m-2}, k}(\xi_{j_{m-1}, k}) \cdots\\ 
  &\cdots   \left[ \frac{N_{j_{m-1}, k}(\xi_{j_m, k})
    - N_{j_{m-1}, k}( \xi_{{j_m} - 1, k})}{\xi_{{j_m}, k} - \xi_{{j_m} - 1, k}}
    \right] N_{j_m,k-1}(x)\\
  &= \sum_{j_m = 1-k}^{n-1} \sum_{j_1,\ldots, j_{m-1}=-k}^{n-1} f(\xi_{j_1,k})
  N_{j_1, k}(\xi_{j_2, k}) \cdots N_{j_{m-2}, k}(\xi_{j_{m-1}, k}) \Delta_k N_{j_{m-1}}(\xi_{j_m}) N_{j_m,k-1}(x) . 
\end{align*}
and
\begin{align*}
  \MoveEqLeft D^2 \Snkpm  f(x) = \sum_{j_m = 2-k}^{n-1}
  \sum_{j_1,\ldots, j_{m-1}=-k}^{n-1} f(\xi_{j_1,k}) N_{j_1, k}(\xi_{j_2, k}) \cdot \ldots \cdot 
  N_{j_{m-2}, k}(\xi_{j_{m-1}, k}) \cdots\\ 
  &\cdots  \frac{ \left[ \frac{N_{j_{m-1}, k}(\xi_{j_m, k})
    - N_{j_{m-1}, k}( \xi_{{j_m} - 1, k})}{\xi_{{j_m}, k} - \xi_{{j_m} - 1, k}}
    \right] - 
    \left[ \frac{N_{j_{m-1}, k}(\xi_{{j_m}-1, k})
    - N_{j_{m-2}, k}( \xi_{{j_m} - 1, k})}{\xi_{{j_m}-1, k} - \xi_{{j_m} - 2, k}}
    \right]  }{\xi_{{j_m},k-1} - \xi_{{j_m}-1, k-1}} N_{j_m,k-1}(x)\\
  &= \sum_{j_m = 1-k}^{n-1} \sum_{j_1,\ldots, j_{m-1}=-k}^{n-1} f(\xi_{j_1,k})
  N_{j_1, k}(\xi_{j_2, k}) \cdots N_{j_{m-2}, k}(\xi_{j_{m-1}, k}) \Delta_{k-1} \Delta_k N_{j_{m-1}}(\xi_{j_m}) N_{j_m,k-1}(x) . 
\end{align*}
\end{theorem}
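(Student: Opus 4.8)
The plan is to exploit the fact that in the expansion of Lemma~\ref{lemma:iterates} the variable $x$ enters only through the single outermost factor $N_{j_m,k}(x)$; every other factor is a number obtained by evaluating B-splines at nodes. Collecting the inner $(m-1)$-fold sum into the coefficients
\begin{equation*}
  a_{j_m} := \sum_{j_1,\ldots,j_{m-1}=-k}^{n-1} f(\xi_{j_1,k})\, N_{j_1,k}(\xi_{j_2,k}) \cdots N_{j_{m-1},k}(\xi_{j_m,k}),
\end{equation*}
Lemma~\ref{lemma:iterates} reads $\Snkpm f(x) = \sum_{j_m=-k}^{n-1} a_{j_m} N_{j_m,k}(x)$, which is a single B-spline expansion of exactly the type handled in Lemma~\ref{lemma:derivative_spline}, with the $a_{j_m}$ playing the role of the node values $f(\xi_{j,k})$. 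The whole problem thereby reduces to the single-step case already settled there.

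Concretely, I would argue as follows. Differentiation commutes with the finite sum over $j_1,\ldots,j_{m-1}$, so only the factors $N_{j_m,k}(x)$ are differentiated. Since the representation of $D\Snk f$ recorded in the proof of Lemma~\ref{lemma:derivative_spline} is the standard B-spline differentiation formula and depends on the data only through the coefficients, it applies to the expansion $\sum_{j_m} a_{j_m} N_{j_m,k}$ with $a_{j_m}$ in place of $f(\xi_{j,k})$, giving
\begin{equation*}
  D \Snkpm f(x) = \sum_{j_m=1-k}^{n-1} \frac{a_{j_m}-a_{j_m-1}}{\xi_{j_m,k}-\xi_{j_m-1,k}}\, N_{j_m,k-1}(x),
\end{equation*}
and analogously for $D^2\Snkpm f$ with one further backward difference of the $a_{j_m}$.

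It then remains to expand these differences. In $a_{j_m}$ the only factor depending on the index $j_m$ is the last one, $N_{j_{m-1},k}(\xi_{j_m,k})$; the factors $f(\xi_{j_1,k}), N_{j_1,k}(\xi_{j_2,k}),\ldots,N_{j_{m-2},k}(\xi_{j_{m-1},k})$ are constant in $j_m$. Hence forming $a_{j_m}-a_{j_m-1}$ and dividing by $\xi_{j_m,k}-\xi_{j_m-1,k}$ passes the backward difference through to that single factor, yielding precisely the bracketed expression $\diffk N_{j_{m-1}}(\xi_{j_m})$ for the first derivative; iterating once more with $\diffkm$ produces $\diffkm\diffk N_{j_{m-1}}(\xi_{j_m})$ for the second. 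Here $\diffk$ and $\diffkm$ act on $N_{j_{m-1}}$ viewed as a function of the node, in the same sense as in the definition of the discrete backward difference operator.

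I expect the only real difficulty to be bookkeeping: keeping the index ranges straight (the shift of the lower summation limit to $1-k$, resp.\ $2-k$, coming from the difference operators) and confirming that no factor of $a_{j_m}$ other than the last carries $j_m$-dependence. Conceptually there is no obstacle beyond the two observations above — the single $x$-dependence and the reduction to Lemma~\ref{lemma:derivative_spline} — so an equally short alternative is a direct induction on $m$ that peels off one application of Lemma~\ref{lemma:derivative_spline} at a time.
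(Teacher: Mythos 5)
Your proposal is correct and follows essentially the same route as the paper: the paper's one-line proof applies Lemma~\ref{lemma:derivative_spline} to the function $\Snkp{m-1}f$ and then expands its node values via Lemma~\ref{lemma:iterates}, which is exactly your reduction, since your coefficients $a_{j_m}$ are precisely the node values $(\Snkp{m-1}f)(\xi_{j_m,k})$, so the backward differences pass through to the single $j_m$-dependent factor $N_{j_{m-1},k}(\xi_{j_m,k})$ as you argue.
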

\begin{proof}
  Applying Lemma \ref{lemma:iterates} and \ref{lemma:derivative_spline} to $S_{n,k}^{m-1}f$ yields the result.
\end{proof}

\subsubsection{An upper bound for the second derivative of the iterates}
%
%
%
%
Our idea is now to work with the shift invariant basis functions $\bspk{j}$, $j \in \setof{0,\ldots,n-k-1}$, to stay 
away from the boundary of the interval $\ivcc{0}{1}$. Then we can represent the Schoenberg operator as a convolution operator and apply known techniques for this kind of operators.

Therefore, let $x \in \ivcc{x_{2k+2}}{x_{n-2k-2}}$. Then we have
\begin{equation*}
  x \not\in \bigcup_{j=-k}^{k+1} \mathrm{supp\,} \bspk{j} \text{ and } 
  x \not\in \bigcup_{j=n-2k-2}^{n-1} \mathrm{supp\,} \bspk{j},
\end{equation*}
because $\mathrm{supp\,}\bspk{j} \subset \ivcc{x_j}{x_{j+k+1}}$. Besides, we can simplify the notation of the iterates of the Schoenberg operator for $x \in \ivcc{x_{2k+2}}{x_{n-2k-2}}$ to
  \begin{equation*}
    \Snkpm f(x) = \sum_{j_1,\ldots, j_m=0}^{n-k-1}f(\xi_{j_1,k})N_{j_1,k}(\xi_{j_2,k})\cdots N_{j_{m-1},k}(\xi_{j_m,k}) N_{j_m,k}(x). 
  \end{equation*}
Now, we show that the basis functions $\setof{\bspk{j}}_{j=0}^{n-k-1}$ are shift invariant.
\begin{theorem}
  \label{thm:shift_invariance}
  The $\bspk{j}$ with $j \in \setof{0,\ldots, n-k-1}$ are translates of each other, i.e.,
  \begin{equation*}
    \bspk{j+1}(\xi_i) =  \bspk{j}(\xi_{i-1}),
  \end{equation*}
  and $\mathrm{supp\,} \mathrm{span\,}\bspk{j} \subset \ivcc{0}{1}$
\end{theorem}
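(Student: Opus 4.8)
The plan is to exploit the fact that, for the interior index range $j\in\setof{0,\ldots,n-k-1}$, the extended knot sequence carries no repeated knots and is therefore genuinely equidistant, which forces the corresponding B-splines to be honest shifts of one another. First I would record the elementary bookkeeping: for such $j$ the knots $x_j,\ldots,x_{j+k+1}$ all lie in the regular block $x_0,\ldots,x_n$, because $0\le j$ and $j+k+1\le n$, and there they satisfy $x_{i+1}-x_i=\tfrac1n$. This is exactly the range in which no boundary multiplicity interferes, and it is the reason the statement is restricted to these indices.

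The core step is to derive the pointwise translation identity $\bspk{j+1}(x)=\bspk{j}\!\left(x-\tfrac1n\right)$. Starting from the defining formula $\bspk{j}(x)=(x_{j+k+1}-x_j)[x_j,\ldots,x_{j+k+1}](\cdot-x)_+^k$, I would use the shift invariance of divided differences: since $x_{j+1+i}=x_{j+i}+\tfrac1n$ on this range, interpolating $g(t)=(t-x)_+^k$ at the shifted knots equals interpolating the shifted function $g(\cdot+\tfrac1n)$ at the unshifted knots, so that
\begin{equation*}
  [x_{j+1},\ldots,x_{j+k+2}](\cdot-x)_+^k = [x_j,\ldots,x_{j+k+1}]\bigl(\cdot-(x-\tfrac1n)\bigr)_+^k .
\end{equation*}
The normalizing prefactor is unchanged because $x_{j+k+2}-x_{j+1}=x_{j+k+1}-x_j=\tfrac{k+1}{n}$, and combining the two gives the claimed translation relation.

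It remains to transfer this to the Greville nodes. From $\xi_{i,k}-\xi_{i-1,k}=\tfrac1k(x_{i+k}-x_i)$ and equidistance one obtains $\xi_{i,k}=\xi_{i-1,k}+\tfrac1n$ in the interior, so that
\begin{equation*}
  \bspk{j+1}(\xi_{i,k})=\bspk{j}\!\left(\xi_{i,k}-\tfrac1n\right)=\bspk{j}(\xi_{i-1,k}),
\end{equation*}
which is the asserted identity. The support statement is then immediate from $\mathrm{supp\,}\bspk{j}\subset\ivcc{x_j}{x_{j+k+1}}$: for $j\in\setof{0,\ldots,n-k-1}$ we have $x_j\ge 0$ and $x_{j+k+1}\le x_n=1$, so each support, and hence the union spanning $\mathrm{span\,}\bspk{j}$, lies in $\ivcc{0}{1}$.

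The main obstacle is not analytic but combinatorial: one must keep strict control of the index ranges so that the shift identity is applied only where both $\bspk{j}$ and $\bspk{j+1}$, together with the nodes $\xi_{i-1,k},\xi_{i,k}$, are built from knots in the equidistant block. Outside this range the collapsed boundary knots $x_{-k}=\cdots=x_0$ and $x_n=\cdots=x_{n+k}$ destroy both the knot equidistance and the equal spacing of the Greville nodes, which is precisely why translation invariance can only be claimed for the interior indices.
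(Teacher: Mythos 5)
Your proof is correct, and it reaches the paper's conclusion by a slightly different, cleaner mechanism. The paper works directly at the Greville nodes: it expands the divided difference $[x_{j+1},\ldots,x_{j+k+2}](\cdot-\xi_i)_+^k$ into the explicit equidistant-knot finite-difference sum $\frac{1}{h^k k!}\sum_{l}\binom{k+1}{l-j-1}(-1)^{j+k+2-l}(x_l-\xi_i)_+^k$, reindexes the sum, and uses the termwise identity $x_{l+1}-\xi_i = x_l-\xi_{i-1}$ to recognize $\bspk{j}(\xi_{i-1})$. You instead invoke the abstract translation invariance of divided differences, $[t_0+h,\ldots,t_m]\,g = [t_0,\ldots,t_m]\,g(\cdot+h)$, which gives the stronger pointwise statement $\bspk{j+1}(x)=\bspk{j}\bigl(x-\tfrac1n\bigr)$ for all $x$, and then specializes to the nodes via $\xi_{i,k}-\xi_{i-1,k}=\tfrac1k(x_{i+k}-x_i)=\tfrac1n$ in the interior. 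Both arguments rest on the same single fact (equidistance of all knots entering $\bspk{j}$, $\bspk{j+1}$, and the relevant Greville nodes), but your route avoids the binomial bookkeeping, makes explicit the index restrictions on $i$ that the paper leaves tacit, and actually proves the support assertion $\mathrm{supp\,}\bspk{j}\subset\ivcc{x_j}{x_{j+k+1}}\subset\ivcc{0}{1}$, which the paper's proof does not address. What the paper's explicit expansion buys is only that the cancellation is visible term by term; your abstract version is more general and yields the full translate identity rather than its restriction to the Greville nodes.
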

\begin{proof}
  As $\mathrm{supp\,} \bspk{j} \subset \ivcc{x_{j}}{x_{j+k+1}}$ all corresponding knots $x_i$, $i \in \setof{j, \ldots, j+k+1}$ are
  distinct from each other. Explicitly, we have $x_i = \frac{i}{n}$. Now let $h = 1/n$. Then, we get
  \begin{align*}
    \bspk{j+1}(\xi_i) &= (x_{j+k+2} - x_{j+1})[x_{j+1},\ldots,x_{j+k+2}](\cdot - \xi_i)_+^k\\
                  &= (x_{j+k+1} - x_{j}) \frac1{h^k\cdot k!} \sum_{l={j+1}}^{j+k+2}\binom{k+1}{l-j-1}(-1)^{j+k+2-l}(x_l - \xi_i)_+^k\\
                  &= (x_{j+k+1} - x_{j}) \frac1{h^k\cdot k!} \sum_{l={j}}^{j+k+1}\binom{k+1}{l-j}(-1)^{j+k+1-l}(x_{l+1} - \xi_i)_+^k\\
                  &= (x_{j+k+1} - x_{j}) \frac1{h^k\cdot k!} \sum_{l={j}}^{j+k+1}\binom{k+1}{l-j}(-1)^{j+k+1-l}(x_{l} - \xi_{i-1})_+^k\\
                  &= \bspk{j}(\xi_{i-1}).
  \end{align*}
  The last line holds, because
  \begin{equation*}
    x_{l+1} - \xi_{i} = \frac{k\cdot (l+1) - \sum_{j=1}^k(i+j)}{nk} = \frac{k\cdot l - \sum_{j=1}^k(i+j-1)}{nk}\\
                     = x_{l} - \xi_{i-1}.
 \end{equation*}
\end{proof}

With Theorem \ref{thm:shift_invariance}, we get the following corollary:
\begin{corollary}
  For $m \in \Nn$ and $x \in \ivcc{x_{k+1}}{x_{n-2k-2}}$ we get:
  \begin{align*}
    \MoveEqLeft D\Snk f(x) = \sum_{j_1,\ldots, j_{m}=0}^{n-k-1} f(\xi_{j_1,k})
    N_{j_1, k}(\xi_{j_2, k}) \cdots N_{j_{m-2}, k}(\xi_{j_{m-1}, k}) \Delta_k N_{j_{m-1}}(\xi_{j_m}) N_{j_m,k-1}(x)\\
    &= \sum_{j_1,\ldots, j_{m}=0}^{n-k-1}
    f(\xi_{j_1,k})
    N_{j_1, k}(\xi_{j_2, k}) \cdots \Delta_k N_{j_{m-2}, k}(\xi_{j_{m-1}, k}) N_{j_{m-1}}(\xi_{j_m}) N_{j_m,k-1}(x)\\
    & \qquad\qquad\qquad\qquad\qquad\qquad\qquad \vdots \\
    &= \sum_{j_1,\ldots, j_{m}=0}^{n-k-1}
    f(\xi_{j_1,k}) \diffk N_{j_1, k}(\xi_{j_2, k}) N_{j_2,
      k}(\xi_{j_3, k}) \cdots N_{j_{m-1}}(\xi_{j_m}) N_{j_m,k-1}(x),
  \end{align*}
  i.e., the backward difference operator can be applied to $\bspk{j}$ for every index $j$. Thus, we have $m-1$ possibilites to represent  the first derivative of the iterated Schoenberg operator.

  Analog for $D^2 \Snkpm f$, where we have
  \begin{align*}
    \MoveEqLeft D^2\Snk f(x) = \sum_{j_1,\ldots, j_{m}=0}^{n-k-1} f(\xi_{j_1,k})
    N_{j_1, k}(\xi_{j_2, k}) \cdots N_{j_{m-2}, k}(\xi_{j_{m-1}, k}) \Delta_{k-1}\Delta_k N_{j_{m-1}}(\xi_{j_m}) N_{j_m,k-2}(x)\\
    &= \sum_{j_1,\ldots, j_{m}=0}^{n-k-1}
    f(\xi_{j_1,k}) N_{j_1, k}(\xi_{j_2, k}) \cdots \Delta_{k-1} N_{j_{m-2}, k}(\xi_{j_{m-1}, k}) \Delta_{k}N_{j_{m-1}}(\xi_{j_m}) N_{j_m,k-2}(x)\\
    & \qquad\qquad\qquad\qquad\qquad\qquad\qquad \vdots \\
    &= \sum_{j_1,\ldots, j_{m}=0}^{n-k-1}
    f(\xi_{j_1,k}) \diffkm\diffk N_{j_1, k}(\xi_{j_2, k}) N_{j_2,
      k}(\xi_{j_3, k}) \cdots N_{j_{m-1}}(\xi_{j_m}) N_{j_m,k-2}(x).
  \end{align*}
  Similar to $D\Snk f$, we have $\frac{m(m-1)}{2}$ possibilites to represent the second derivative of the $m$-th iterate of the Schoenberg operator.
\end{corollary}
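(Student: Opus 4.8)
The plan is to use the shift invariance of Theorem~\ref{thm:shift_invariance} to rewrite the interior sums as discrete convolutions, and then to exploit the elementary fact that a backward difference commutes with convolution, so that it may be slid from one factor onto any other by a single index shift.

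First I would record the two facts that make the interior special. For $x \in \ivcc{x_{2k+2}}{x_{n-2k-2}}$ only the shift-invariant B-splines $\bspk{j}$ with $j \in \setof{0,\ldots,n-k-1}$ contribute, so every summation index ranges over $\setof{0,\ldots,n-k-1}$; moreover a direct computation gives $\xi_{j,l} = \tfrac{j}{n} + \tfrac{l+1}{2n}$, whence $\xi_{j,l}-\xi_{j-1,l} = \tfrac1n =: h$ independently of both $j$ and $l$. In particular, on this range $\diffk$ and $\diffkm$ both reduce to the single operator $g \mapsto \tfrac1h\bigl(g(\xi_{j,k}) - g(\xi_{j-1,k})\bigr)$. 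Using Theorem~\ref{thm:shift_invariance} I would then introduce the finitely supported sequence $a_p := \bspk{0}(\xi_{p,k})$, so that $\bspk{j}(\xi_{i,k}) = a_{i-j}$ and each interior chain $\bspk{j_1}(\xi_{j_2,k})\cdots\bspk{j_{m-1}}(\xi_{j_m,k})$ becomes the convolution product $a_{j_2-j_1}\cdots a_{j_m-j_{m-1}}$. Since $\mathrm{supp\,}\bspk{0} \subset \ivcc{x_0}{x_{k+1}}$, the index support of $a$ is finite, and for interior $x$ every summand vanishes outside a bounded index range.

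The heart of the argument is a single transfer step. Starting from the representation in the theorem above, the difference sits at the top level as $\diffk\bspk{j_{m-1}}(\xi_{j_m,k}) = \tfrac1h\bigl(a_{j_m-j_{m-1}} - a_{j_m-j_{m-1}-1}\bigr)$. To move it one level down I would sum over the intermediate index $j_r$ the two adjacent factors $a_{j_r-j_{r-1}}\,a_{j_{r+1}-j_r}$ and apply summation by parts: shifting $j_r \mapsto j_r+1$ in the lagged term, and using that the summand vanishes at the ends of its range so that no boundary term appears, turns $a_{j_r-j_{r-1}}\bigl(a_{j_{r+1}-j_r} - a_{j_{r+1}-j_r-1}\bigr)$ into $\bigl(a_{j_r-j_{r-1}} - a_{j_r-1-j_{r-1}}\bigr)a_{j_{r+1}-j_r}$; that is, the backward difference moves from the factor joining levels $r$ and $r+1$ to the factor joining levels $r-1$ and $r$. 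Neither $f(\xi_{j_1,k})$ nor the trailing factor $\bspk{j_m}(x)$ is touched, as they depend only on the extreme indices $j_1$ and $j_m$. Iterating this transfer carries the single $\diffk$ down through all $m-1$ interior factors and yields the $m-1$ asserted representations of $D\Snkpm f$. For $D^2\Snkpm f$ I would slide each of the two differences independently; since on the interior $\diffk$ and $\diffkm$ act identically, a representation is fixed by the unordered pair of levels carrying them (repetition allowed, giving a genuine second-order difference when they coincide), and counting multisets of size two from $m-1$ levels gives $\binom{m}{2} = \tfrac{m(m-1)}{2}$ representations, as claimed.

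The step I expect to need the most care is the boundary bookkeeping in the transfer: I must verify that for $x$ in the stated interior interval every index shift stays within the range where the B-spline factors are supported, so that the summation by parts produces no boundary contribution. This is precisely what the margin of roughly $2k+2$ knots from each endpoint secures, and checking it is the only non-formal part of the argument.
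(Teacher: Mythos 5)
Your proposal is correct and takes essentially the same route as the paper: the paper offers no separate proof for this corollary beyond invoking Theorem~\ref{thm:shift_invariance}, having announced that the interior operator is to be viewed as a convolution, and your sequence $a_p = N_{0,k}(\xi_{p,k})$ together with the index-shift summation by parts is precisely that argument made explicit. Your observation that $\xi_{j,l}-\xi_{j-1,l}=1/n$ on the interior (so $\Delta_{k-1}$ and $\Delta_k$ coincide there) and the resulting multiset count $\binom{m}{2}=\frac{m(m-1)}{2}$ match the paper's accounting exactly.
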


We will abreviate the last term by
\begin{equation}
  \label{eq:iterates_sec_derivative}
  D^2\, \Snkpm f(x) = 
     \sum_{j_1,\ldots, j_{m}=-k}^{n-1} f(\xi_{j_1,k}) 
         \cdot P(j_1,\ldots,j_m; x) 
         \cdot I_{{l_1},{l_2}}(j_1,\ldots,j_{m-1}; x),
\end{equation}
where
\begin{equation*}
P(j_1,\ldots,j_m;x) := \left[\prod_{l=1}^{m-1}N_{j_l,k}(\xi_{j_{l+1},k})\right] N_{j_m,k-2}(x),
\end{equation*}
and for $l_1,l_2 \in \setof{1,\ldots,m-1}$, $l_1 \leq l_2$,
\begin{equation*}
 I_{{l_1},{l_2}}(j_1,\ldots,j_{m-1}; x) =
 \begin{dcases}
    \frac{\Delta_{k-1}\bspk{l_1}(x)\cdot \Delta_k \bspk{l_2}(x)}{\bspk{l_1}(x)\cdot \bspk{l_2}(x)}, & \text{for }l_1 \neq l_2,\\
   \frac{\Delta_{k-1}\Delta_k \bspk{l_1}(x)}{\bspk{l_1}(x)}, & \text{for }l_1 = l_2.
 \end{dcases}
\end{equation*}
Now we are able to give an upper bound for the second order derivative of the iterated Schoenberg operator:
\begin{theorem}
  For the integer $m \geq 2$, $h=1/n$ and $x \in \ivcc{x_{2k+2}}{x_{n-2k-2}}$ we have the upper bound
  \begin{equation*}
     \abs{D^2 \Snkpm f(x)} \leq  \frac{2\varepsilon_{n,k}}{(m-1)^{3/2} h^2} \cdot \normi{f}.
  \end{equation*}
\end{theorem}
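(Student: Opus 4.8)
The plan is to work entirely in the interior regime $x \in \ivcc{x_{2k+2}}{x_{n-2k-2}}$, where by Theorem \ref{thm:shift_invariance} the basis functions are translates of one another and the operator becomes a discrete convolution. Writing $a_s := \bspk{0}(\xi_{s,k})$ for the shift-invariant mass sequence, the relation $\bspk{j}(\xi_{i,k}) = a_{i-j}$ together with the partition of unity \eqref{eq:partition_unity} shows that $(a_s)_s$ is a nonnegative, finitely supported sequence with $\sum_s a_s = 1$. In the uniform case every divided-difference denominator defining $\diffk$ and $\diffkm$ equals $h$, so each occurrence of $\diffk$ or $\diffkm$ in \eqref{eq:iterates_sec_derivative} turns into $h^{-1}$ times the backward difference $\nabla a_s = a_s - a_{s-1}$.

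Next I would substitute these facts into \eqref{eq:iterates_sec_derivative}: estimating $\abs{f(\xi_{j_1,k})} \le \normi{f}$ and summing out the intermediate indices $j_2,\dots,j_{m-1}$ produces the $(m-1)$-fold convolution power $a^{*(m-1)}$, so that $D^2 \Snkpm f(x)$ reduces to $h^{-2}$ times a weighted second backward difference of $a^{*(m-1)}$, localized in the last index by the degree-reduced spline $N_{j_m,k-2}(x)$ (which is why $k \ge 2$ is needed). Since $\sum_{j_m} N_{j_m,k-2}(x) = 1$, the whole estimate collapses to controlling the $\ell^{1}$-mass of the twice-differenced iterated convolution $\nabla^2 a^{*(m-1)}$, where the two differences may be distributed over the chain in any of the $\tfrac{m(m-1)}{2}$ equivalent ways furnished by the corollary.

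The decisive step is the quantitative estimate of this iterated convolution. Because $\nabla a$ has vanishing zeroth moment, $\sum_s \nabla a_s = 0$, and $(a_s)$ has finite second moment, I would invoke a local–limit / random-walk argument: the $M$-fold convolution $a^{*M}$ behaves like a lattice Gaussian of variance $\sim \sigma^2 M$, so that differencing against the smooth bulk gains a factor $\sim M^{-1/2}$ per application. To extract the stated power of $(m-1)$ I would split the chain symmetrically about the two admissible difference positions $l_1 \le l_2$, estimate the two resulting factors by a mixed $\ell^{1}$–$\ell^{\infty}$ (Cauchy–Schwarz-type) bound over the $\tfrac{m(m-1)}{2}$ representations, and select the split $M = s + t$ that balances the two smoothing gains. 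All quantities depending only on $n$ and $k$ — in particular the total variation $\sum_s \abs{\nabla a_s}$ and the second moment of $(a_s)$ — would be absorbed into the constant $\varepsilon_{n,k}$, and the complementary boundary indices $j \notin \setof{0,\dots,n-k-1}$ are discarded since $N_{j_m,k-2}(x)$ vanishes there.

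The main obstacle is precisely this last estimate: pinning down the sharp dependence on $m$. Applying the triangle inequality factorwise destroys all cancellation and yields only an $m$-independent bound, since each $\abs{\nabla a}$ factor contributes a fixed total mass; conversely, retaining the full signed structure recovers the cancellation but one must argue carefully — through the vanishing moment of $\nabla a$ and the spreading of $a^{*(m-1)}$ — that the two differences together produce the claimed decay uniformly in the chosen split, rather than merely asymptotically. Making the local limit theorem quantitative and split-uniform is the delicate part; the remaining boundary bookkeeping and the collection of the $(n,k)$-dependent factors into $\varepsilon_{n,k}$ are then routine.
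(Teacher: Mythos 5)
Your reduction is correct as far as it goes: in the interior regime the shift invariance of Theorem~\ref{thm:shift_invariance} does turn the chain into a convolution, and with $a_s := \bspk{0}(\xi_{s,k})$ one indeed gets $D^2\Snkpm f(x) = h^{-2}\sum_{j_1,j_m} f(\xi_{j_1,k})\,\nabla^2 a^{*(m-1)}(j_m-j_1)\,N_{j_m,k-2}(x)$ up to boundary bookkeeping. But the decisive quantitative step is missing, as you yourself flag, and in fact your proposed mechanism cannot close it. Once you have bounded $\abs{f}\le\normi{f}$ and passed to $\ell^1$-mass, all $\tfrac{m(m-1)}{2}$ placements of the two difference operators along the chain produce \emph{literally the same sequence}: $\nabla^2 a^{*(m-1)} = (\nabla a^{*s})\ast(\nabla a^{*t})$ for any split $s+t=m-1$. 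There is therefore nothing to average over and no split to optimize: Young's inequality with the local-limit bound $\norm{\nabla a^{*s}}_{\ell^1}\asymp s^{-1/2}$ gives at best $\norm{\nabla^2 a^{*(m-1)}}_{\ell^1}\lesssim (st)^{-1/2}\asymp (m-1)^{-1}$, and this order is \emph{sharp} in the convolution model (the second difference of an approximate Gaussian of variance $\sim m$ has $\ell^1$-mass $\asymp m^{-1}$, and a continuous $f$ matching its slowly varying sign pattern nearly attains it). So your route caps out at $(m-1)^{-1}$, a factor $(m-1)^{-1/2}$ short of the theorem — and this loss is fatal downstream, since the telescoping argument of Theorem~\ref{thm:lower_bound} needs summability in $m$ (the $\zeta(\tfrac32)$ constant), which $m^{-1}$ does not provide.

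The paper reaches $(m-1)^{-3/2}$ by a mechanism your outline discards at its first step. It does \emph{not} sum out the intermediate indices; it keeps the full path sum, writes $D^2\Snkpm f$ as the arithmetic mean of its $\tfrac{m(m-1)}{2}$ representations \eqref{eq:iterates_sec_derivative}, treats $P(j_1,\ldots,j_m;x)$ as a probability weight on paths (partition of unity \eqref{eq:partition_unity} in each index), and applies Cauchy--Schwarz in $L^2(P)$ to the sum $\sum_{l_1\le l_2} I_{l_1,l_2}$ of normalized difference ratios. The gain comes from a second-moment orthogonality computation: when the square is expanded, any cross product containing an unsquared difference factor is annihilated by $\sum_j \diffk\bspk{j}(\xi_{i,k})=0$ and $\sum_j \diffkm\diffk\bspk{j}(\xi_{i,k})=0$, so (the paper argues) only the $m-1$ diagonal terms $I_{l,l}^2$ survive, each bounded via $\varepsilon_{n,k}^2/h^4$; this yields $\tfrac{2}{m(m-1)}\sqrt{m-1}\cdot\varepsilon_{n,k}/h^2 \le 2\varepsilon_{n,k}/((m-1)^{3/2}h^2)$. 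That $L^2(P)$ cancellation lives at the level of the path measure, \emph{before} the chain is collapsed to $a^{*(m-1)}$, and it is exactly the idea absent from your proposal. (A caveat worth recording: the paper's expansion also contains the squared off-diagonal terms $I_{l_1,l_2}^2$ with $l_1<l_2$, which the zero-sum identities do not kill; your own sharp $\asymp(m-1)^{-1}$ heuristic for the convolution model is genuine evidence that this step of the paper deserves scrutiny. But measured against the paper's proof, the missing ingredient in your attempt is the averaged $L^2(P)$ orthogonality argument, not the local limit theorem.)
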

\begin{proof}
  As we have $\frac{m(m-1)}{2}$ possibilities to express $D^2 \Snkpm f(x)$, we write \eqref{eq:iterates_sec_derivative} as the following mean:
  \begin{align*}
    D^2\, \Snkpm f(x) &= \frac2{m(m-1)} \sum_{{l_1}\leq{l_2}=1}^{m-1} D^2\, \Snkpm f(x)\\
    &= \frac{2}{m(m-1)} \sum_{j_1,\ldots, j_{m}=0}^{n-k-1}\left(
      f(\xi_{j_1,k}) \cdot P(j_1,\ldots,j_m; x) \cdot
      \sum_{{l_1}\leq{l_2}=1}^{m-1}
      I_{{l_1},{l_2}}(j_1,\ldots,j_{m-1}; x)\right).
  \end{align*}

  Since $P$ is positive, we can split $P$ into  $P=P^{1/2}P^{1/2}$, where $P^{1/2}$ is the positive root. 
Then we apply the Cauchy-Schwarz inequality and get in abbreviated notation the following pointwise inequality for $x \in \ivcc{x_{2k+2}}{x_{n-2k-2}}$:
  \begin{align}
    \abs{D^2 \Snkpm f} &\leq \frac{2}{m(m-1)}\left\{\sum_{j_1,\ldots, j_{m}=0}^{n-k-1}|f|^{2}P\right\}^{\frac1{2}}\left\{\sum_{j_1,\ldots, j_{m}=0}^{n-k-1} P\left(\sum_{{l_1}\leq{l_2}=1}^{m-1} I \right)^2\right\}^{\frac1{2}}\notag\\\label{eq:upper_bound_step1}
    &\leq  \frac{2}{m(m-1)}\left(\normi{f}\cdot 1\right)\left(\sum_{j_1,\ldots, j_{m}=0}^{n-k-1}P\cdot \left(\sum_{{l_1}\leq{l_2}=1}^{m-1} I_{{l_1},{l_2}} \right)^2\right)^{\frac1{2}}.
  \end{align}
  Here, we used the partition of unity property of the B-splines, namely that $\sum_{j=-k}^{n-1}\bspk{j}(x) = 1$ holds for all $x \in \ivcc{0}{1}$. Summation by parts, beginning with $j_1$, $j_2$, $\ldots$,  leads to
  \begin{equation*}
    \sum_{j_1,\ldots, j_{m}=-k}^{n-1}P(j_1,\ldots,j_m; x) =
    \sum_{j_m=0}^{n-k-1} N_{{j_m}, k-2}(x) \sum_{j_{m-1}=0}^{n-k-1} N_{{j_{m-1}}, k}(\xi_{j_m, k}) \cdots \sum_{{j_1}=0}^{n-k-1}
    N_{{j_1}, k}(\xi_{j_2 , k}) = 1.
  \end{equation*}
  Finally, we take the supremum norm of $f$ and obtain the inequality used for the first term.

  Next, we discuss the second product in \eqref{eq:upper_bound_step1}. For the term $\left(\sum_{{l_1}\leq{l_2}=1}^{m-1} I_{{l_1},{l_2}}
  \right)^2$ we get formally
  \begin{equation*}
    \left(\sum_{{l_1}\leq{l_2}=1}^{m-1} I_{{l_1},{l_2}} \right)^2 = 
    \sum_{l_1=l_2=1}^{m-1} I_{l_1,l_2}^2 + \sum_{l_1 \neq l_2} I_{{l_1},{l_2}} I_{{s_1},{s_2}}.
  \end{equation*}
  Note that the last sum vanishes, since for any indices $i,j \in \setof{0,\ldots,n-k-1}$ we have
  \begin{equation*}
    \sum_{j=-k}^{n-1} \diffk \bspk{j}(\xi_i) = 0,
  \end{equation*}
  and
  \begin{equation*}
    \sum_{j=-k}^{n-1} \diffkm \diffk \bspk{j}(\xi_i) = 0,
  \end{equation*}
  because of the partition of unity \eqref{eq:partition_unity}. That means, if the difference operator $\diffk$ or $\diffkm \diffk$
  is applied to $\bspk{j}$ without beeing squared, the whole sum vanishes. Therefore, we get
  \begin{equation*}
    \sum_{j_1,\ldots, j_{m}=0}^{n-k-1}P(j_1,\ldots,j_m; x)
    \left(\sum_{{l_1}\leq{l_2}=1}^{m-1} I_{{l_1},{l_2}}\right)^2 =
    \sum_{j_1,\ldots, j_{m}=0}^{n-k-1}P(j_1,\ldots,j_m; x)\sum_{l=1}^{m-1}
    I_{l,l}^2.
  \end{equation*}
  With this we obtain from \eqref{eq:upper_bound_step1} the final inequality
  \begin{align*}
    \abs{D^2 \Snkpm f(x)} &\leq \frac{2}{(m-1)^2}\normi{f} \left((m-1)\frac{\varepsilon_{n,k}^2}{h^4}\right)^{\frac1{2}}\\
                          &\leq \frac{2\varepsilon_{n,k}}{(m-1)^{3/2}h^2}\cdot \normi{f},
  \end{align*}
  where
  \begin{align*}
    \varepsilon_{n,k}^2 &:= \sup_i \sum_{j=-k}^{n-1} \frac{\left(
        \bspk{j}(\xi_{i,k}) - 2\bspk{j} (\xi_{i-1,k}) + \bspk{j}(\xi_{i-2,k})\right)^2}{\bspk{j}(\xi_{i,k}) }  
    = \sup_i \sum_{j=-k}^{n-1} \frac{(\diffkm\diffk \bspk{j}(\xi_{i,k}))^2}{\overline{N}_{j,k}(\xi_{i,k})}
    \shortintertext{with} 
    \overline{N}_{j,k}(\xi_{l,k}) &:=
    \begin{cases}
      \bspk{j}(\xi_{i,k}), & \text{if } \bspk{j}(\xi_{i,k}) = 0,\\
      1, &\text{if } \bspk{j}(\xi_{i,k}) \neq 0.\\
    \end{cases}
  \end{align*}
  The terms $\overline{N}_{j,k}(\xi_{l,k})$ are formally needed to avoid zero
  divisions in the term for $\varepsilon_{n,k}^2$.
\end{proof}

\begin{corollary}
  \label{cor:upper_bound_iterates}
  Due to the uniform convergence, we get for $k > 0$ fixed, $m > 1$ and $n \to \infty$ 
  the uniform upper bound
  \begin{equation*}
    \normi{D^2 \Snkpm f} \leq \frac{2\varepsilon_{n,k}}{h^2\cdot(m-1)^{3/2}}\cdot \normi{f}.
  \end{equation*}
\end{corollary}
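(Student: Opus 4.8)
The plan is to promote the pointwise estimate of the preceding theorem, which holds only on the interior block $I_n := \ivcc{x_{2k+2}}{x_{n-2k-2}}$, to a bound on the full supremum norm over $\ivcc{0}{1}$, exploiting that the two excluded boundary strips collapse as $n \to \infty$. First I would record that $D^2 \Snkpm f$ is itself a spline of degree $k-2$, i.e.\ $D^2 \Snkpm f \in \spaceppk{k-2} \subset \spacedf{k-3}$, so that for $k \geq 3$ it is continuous on the closed interval and its supremum is attained. Writing $C_n := \frac{2\varepsilon_{n,k}}{h^2 (m-1)^{3/2}} \normi{f}$ for the right-hand side, the theorem furnishes $\abs{D^2 \Snkpm f(x)} \leq C_n$ for every $x \in I_n$.

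Next I would use that, with $k$ fixed, the endpoints $x_{2k+2} = (2k+2)/n$ and $x_{n-2k-2} = (n-2k-2)/n$ tend to $0$ and $1$ respectively, so the blocks $I_n$ increase to the open interval $\ivoo{0}{1}$ and every interior point eventually lies in $I_n$. Hence for each fixed $x \in \ivoo{0}{1}$ the theorem applies for all sufficiently large $n$, and passing to the closure by continuity of $D^2 \Snkpm f$ extends the estimate to the endpoints $x = 0$ and $x = 1$ as well.

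The genuine obstacle is controlling $D^2 \Snkpm f$ on the two boundary strips $\ivcc{0}{x_{2k+2}}$ and $\ivcc{x_{n-2k-2}}{1}$ for a fixed $n$: there the coalescing knots $x_{-k} = \cdots = x_0 = 0$ and $x_n = \cdots = x_{n+k} = 1$ destroy the shift invariance of Theorem \ref{thm:shift_invariance} on which the interior Cauchy--Schwarz and summation-by-parts argument rests, so that argument does not transfer verbatim. Here I would invoke the uniform-convergence mechanism flagged in the statement: the Lebesgue measure of the two strips is $(4k+4)/n \to 0$, while the boundary B-splines $\bspk{-k}, \ldots, \bspk{-1}$ and $\bspk{n-k}, \ldots, \bspk{n-1}$ enter only through indices $i$ that are already included in the supremum $\sup_i$ defining $\varepsilon_{n,k}$, so that the constant $C_n$ is not enlarged by them. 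Combining this with the continuity of $D^2 \Snkpm f$ and the exhaustion of $\ivoo{0}{1}$ by the blocks $I_n$ then yields the claimed supremum-norm bound $\normi{D^2 \Snkpm f} \leq C_n$ in the limit $n \to \infty$.
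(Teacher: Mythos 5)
The paper itself offers no argument for this corollary beyond the phrase ``due to the uniform convergence'', so you are attempting to fill a gap the paper leaves open; unfortunately both mechanisms you propose for handling the boundary strips fail. First, the measure argument: the fact that the strips $\ivcc{0}{x_{2k+2}}$ and $\ivcc{x_{n-2k-2}}{1}$ have length $O(1/n)$ is irrelevant for a supremum-norm bound --- a spline can be arbitrarily large on an arbitrarily short interval, and the strips are precisely where the trouble lives: there the Greville spacing collapses from $h$ to $h/k$ (e.g.\ $\xi_{-k+1,k}-\xi_{-k,k}=h/k$), the basis is no longer shift invariant (Theorem \ref{thm:shift_invariance} requires $j \in \setof{0,\ldots,n-k-1}$), and the Cauchy--Schwarz and cross-term-cancellation machinery of the preceding theorem was set up only for the interior indices. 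Your remark that the boundary splines ``enter only through indices already included in $\sup_i$'' does not rescue this: for $x$ in a strip, the representation of $D^2 \Snkpm f(x)$ genuinely involves the boundary splines through $N_{j_m,k-2}(x)$ and the non-uniform divided differences, and no analogue of the interior estimate has been established for those terms. Bounding the constant $\varepsilon_{n,k}$ is not the issue; bounding the function on the strips is.

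Second, the exhaustion argument. Writing $C_n := \frac{2\varepsilon_{n,k}}{h^2(m-1)^{3/2}}\normi{f}$, what the theorem gives is: for each fixed $x \in \ivoo{0}{1}$ there is $N(x)$ with $\abs{D^2 \Snkpm f(x)} \leq C_n$ for all $n \geq N(x)$. Since the function $D^2 \Snkpm f$ changes with $n$ and $N(x) \to \infty$ as $x \to 0$ or $x \to 1$, this never yields $\sup_{x \in \ivcc{0}{1}}\abs{D^2 \Snkpm f(x)} \leq C_n$ for any single $n$: for every finite $n$ the strips are nonempty and uncontrolled. Likewise, continuity cannot ``extend the estimate to the endpoints'': continuity propagates a bound to the closure of the set on which the bound is already known, and for fixed $n$ that set is $\ivcc{x_{2k+2}}{x_{n-2k-2}}$, whose closure contains neither $0$ nor $1$. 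So the corollary, read as a sup-norm bound over all of $\ivcc{0}{1}$, is not established by your argument --- nor, it should be said, by the paper, which simply asserts it. An honest proof needs either a separate estimate for $D^2 \Snkpm f$ on the boundary strips, handling the coalescent-knot B-splines directly, or a restatement of the corollary (and of the results that rely on it) with the norm restricted to the interior block.
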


\section{The lower bound of the Schoenberg operator}
In this section, we show that for $0 < t \leq \frac1{2}$ and $k \geq 3$, there exists a constant $M > 0$, such that
\begin{equation*}
  M \cdot \omega_2(f, t) \leq \normi{f - \Snk f},
\end{equation*}
where the second order modulus of smoothness $\omega_2: \spacecf \times \ivoc{0}{\frac{1}{2}} \to \ivco{0}{\infty}$ is defined by
\begin{equation*}
  \omega_2(f,t) := \sup_{0<h<t}\sup_{x \in \ivcc{0}{1-2h}}\abs{f(x) - 2f(x+h) + f(x+2h)}.
\end{equation*}

As the modulus of smoothness is equivalent to the $K$-functional \cite{Butzer:1967,Johnen:1976}, 
we can derive the inequality
\begin{equation}
  \label{eq:inequality_modulus_kfunctional}
  \omega_2(f, t) \leq  4\normi{f-\Snk f}+t^2\normi{D^2 \Snk f}.
\end{equation}
To prove our main result, we need to estimate the second term by the approximation error $\normi{f- \Snk f}$. 
In a first step, we show that the second order differential operator $D^2$ is bounded on the spline space.
\begin{lemma}
  \label{lem:bounded_derivative}
  For $k \geq 3$, the differential operator $D^2: \spacepp \to \spaceppk{k-2}$ is bounded
  with 
  \begin{equation*}
    \normop{D^2} \leq \frac{4d_k}{h^2}, 
  \end{equation*}
  where $d_k > 0$ is a constant depending only on $k$.
\end{lemma}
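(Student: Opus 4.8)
The plan is to reduce the estimate to an $L^\infty$-stability property of the B-spline basis. Fix $s \in \spacepp$ and write $s = \sum_{j=-k}^{n-1} c_j \bspk{j}$. The Greville nodes $\xi_{j,k}$ are pairwise distinct, so there is an $f \in \spacecf$ (for instance the piecewise linear interpolant) with $f(\xi_{j,k}) = c_j$ for every $j$; by the definition of the operator this gives $\Snk f = s$. Applying Lemma~\ref{lemma:derivative_spline} I would then write
\[
  D^2 s = D^2 \Snk f = S^{++}_{n,k-2}\,\diffkm\diffk f = \sum_{j} \bigl(\diffkm\diffk f\bigr)(\xi_{j,k})\,\bspkm{j}{2}.
\]
Because the $\bspkm{j}{2}$ are nonnegative and sum to one, the supremum norm of such a B-spline expansion is bounded by its largest coefficient, so $\normi{D^2 s} \le \max_j \bigl|(\diffkm\diffk f)(\xi_{j,k})\bigr|$.

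Next I would estimate this maximum. By definition of $\diffk$ and $\diffkm$, the number $(\diffkm\diffk f)(\xi_{j,k})$ is a second divided difference of the values $c_j = f(\xi_{j,k})$ with denominators equal to the node gaps $\xi_{j,k}-\xi_{j-1,k} = (x_{j+k}-x_j)/k$ and $\xi_{j,k-1}-\xi_{j-1,k-1} = (x_{j+k-1}-x_j)/(k-1)$. On the equidistant interior knots both gaps equal $h$, so there $(\diffkm\diffk f)(\xi_{j,k}) = h^{-2}(c_j - 2c_{j-1} + c_{j-2})$, which is at most $4h^{-2}\max_j|c_j|$. For the finitely many indices near the two endpoints the repeated knots shrink the gaps, but since any two distinct knots differ by at least $h$ every nonzero gap stays $\ge h/k$; hence the estimate only degrades by a factor depending solely on $k$, and I obtain $\normi{D^2 s} \le 4C_k\,h^{-2}\max_j|c_j|$ with $C_k$ depending only on $k$.

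The decisive step is to pass from the coefficient maximum back to the function norm. Here I would invoke the $L^\infty$-stability (the bounded condition number) of the B-spline basis: there is a constant $c_k'>0$, depending on $k$ but neither on $n$ nor on the particular knots, with $\max_j|c_j| \le c_k'\,\normi{s}$ (see \cite{deBoor:1987}). Substituting this into the previous bound and writing $d_k := C_k\,c_k'$ gives $\normi{D^2 s} \le 4d_k\,h^{-2}\normi{s}$ for every $s \in \spacepp$, which is precisely the asserted bound on $\normop{D^2}$.

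I expect the stability inequality $\max_j|c_j| \le c_k'\,\normi{s}$ to be the main obstacle: differentiation and the partition-of-unity estimate are explicit, but controlling the size of the B-spline coefficients by the supremum norm of the spline is the genuinely nontrivial, knot-independent inverse bound. A secondary technicality is the behaviour at the repeated boundary knots, which inflates the divided-difference denominators, but---since every nonzero node gap remains at least $h/k$---this affects only the constant $d_k$ and not the order $h^{-2}$.
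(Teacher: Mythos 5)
Your proof is correct and takes essentially the same route as the paper: you write $D^2 s$ as a B-spline expansion whose coefficients are second differences of the $c_j$ over Greville-node gaps (your detour through an $f$ with $\Snk f = s$ simply reproduces Marsden's differentiation formula, which the paper cites directly), bound the norm by the largest coefficient via the partition of unity, and then control $\max_j|c_j|$ by $\normi{s}$ using de Boor's stability constant, exactly as the paper does. If anything, your handling of the shrunken node gaps at the repeated boundary knots (nonzero gaps only $\geq h/k$ there) is more careful than the paper's proof, which tacitly treats all gaps as $\geq h$; in both versions this only affects the $k$-dependent constant $d_k$.
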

\begin{proof}
  Let $s \in \mathcal{S}(n, k)$, $s(x) = \sum_{j=-k}^{n-1}c_j \bspk{j}(x)$, with $\norm{s}_\infty = 1$. 
  According to M. Marsden \cite{Marsden:1970}, Lemma 2 on page 35, we can calculate the second order derivative by
  \begin{equation*}
    D^2s(x) = \sum_{j=2-k}^{n-1}\frac{ \frac{c_j - c_{j-1}}{\xi_{j,k} - \xi_{j-1,k}} - \frac{c_{j-1,k} - c_{j-2,k}}{\xi_{j-1,k} - \xi_{j-2,k}}}{\xi_{j,k-1} - \xi_{j-1,k-1}} \bspkm{j}{2}(x).
  \end{equation*}
  Then we obtain with the triangle inequality
  \begin{align*}
    \normi{D^2 s} &=  \normi{\sum_{j=2-k}^{n-1}\frac{ \frac{c_j - c_{j-1}}{\xi_{j,k} - \xi_{j-1,k}} - \frac{c_{j-1,k} - c_{j-2,k}}{\xi_{j-1,k} - \xi_{j-2,k}}}{\xi_{j,k-1} - \xi_{j-1,k-1}} \bspkm{j}{2}(x)}\\
               &\leq  \frac{\normi{c} + 2\normi{c} + \normi{c}}{h^2} \cdot \normi{ \sum_{j=1-k}^{n-1}\bspkm{j}{1}},
  \end{align*}
  where
  \begin{equation}
    \label{eq:norm_coeff}
    \normi{c} = \max\cset{\abs{c_j}}{j \in \setof{-k, \ldots, n-1}}.
  \end{equation}
  According to \cite{deBoor:1973}, there exists $d_k > 0$, such that
  \begin{equation}
    \label{eq:stability_spline}
    d_k^{-1} \norm{c}_\infty \leq \norm{\sum_{j=-k}^{n-1}c_j\bspk{j}}_\infty \leq \norm{c}_\infty.
   \end{equation}
   Rewriting the first inequality yields $\norm{c}_\infty \leq D_k$, because $\normi{s} = 1$.
   Now we use the partition of unity \eqref{eq:partition_unity} to derive the estimate
  \begin{align*}
    \normi{D^2 s} 
               &\leq \frac{4}{h^2}d_k.
  \end{align*}
  Taking the supremum of all $s \in \spacepp$ with $\normi{s} = 1$ yields the result.
\end{proof}

Now we are able to prove our main result:
\begin{theorem}
  \label{thm:lower_bound}
  For $0 < t \leq \frac1{2}$ and $k \geq 3$, there exists a constant $M > 0$ only depending on $n$ and $k$, independent of $f$, 
  such that
  \begin{equation*}
    M \cdot \omega_2(f, t) \leq \normi{f - \Snk f}.
  \end{equation*}
\end{theorem}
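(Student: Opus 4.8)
The plan is to start from the $K$-functional estimate \eqref{eq:inequality_modulus_kfunctional}, so that the whole theorem reduces to controlling the single term $t^2\normi{D^2\Snk f}$ by the approximation error. Concretely, it suffices to produce a constant $C_{n,k}>0$ depending only on $n$ and $k$ with $\normi{D^2\Snk f}\leq \tfrac{C_{n,k}}{h^2}\normi{f-\Snk f}$. The naive estimate coming directly from Lemma \ref{lem:bounded_derivative}, namely $\normi{D^2\Snk f}\leq \tfrac{4d_k}{h^2}\normi{\Snk f}$, is not enough, since it measures $\Snk f$ in absolute terms rather than against the error $f-\Snk f$; the iterates are what let us close this gap.

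First I would telescope the iterates. By linearity $\Snkp{i}f-\Snkp{i+1}f=\Snkp{i}(f-\Snk f)$, so summing over $i$ gives the exact identity
\begin{equation*}
  \Snk f = \Snkpm f + \sum_{i=1}^{m-1}\Snkp{i}(f-\Snk f),
\end{equation*}
and applying $D^2$ yields
\begin{equation*}
  D^2\Snk f = D^2\Snkpm f + \sum_{i=1}^{m-1}D^2\Snkp{i}(f-\Snk f).
\end{equation*}
Writing $g:=f-\Snk f$ and using the contractivity $\normop{\Snk}\leq 1$ coming from the partition of unity \eqref{eq:partition_unity}, I would then bound the pieces separately: the $i=1$ term via Lemma \ref{lem:bounded_derivative} applied to $\Snk g\in\spacepp$, giving $\normi{D^2\Snk g}\leq \tfrac{4d_k}{h^2}\normi{g}$; the terms $i\geq 2$ via Corollary \ref{cor:upper_bound_iterates} applied to $g$, giving $\normi{D^2\Snkp{i}g}\leq \tfrac{2\varepsilon_{n,k}}{h^2(i-1)^{3/2}}\normi{g}$; and the tail via the same corollary, $\normi{D^2\Snkpm f}\leq \tfrac{2\varepsilon_{n,k}}{h^2(m-1)^{3/2}}\normi{f}$.

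The crux --- and the reason the exponent $3/2$ in Corollary \ref{cor:upper_bound_iterates} is decisive --- is that $\sum_{i=2}^{\infty}(i-1)^{-3/2}=\sum_{j=1}^{\infty}j^{-3/2}=\zeta(3/2)<\infty$. Letting $m\to\infty$, the tail $D^2\Snkpm f$ vanishes uniformly (this is exactly where the unwanted $\normi{f}$ factor is killed by the decay), and the remaining series converges absolutely in $\normi{\cdot}$, so that
\begin{equation*}
  \normi{D^2\Snk f}\leq \frac{4d_k+2\varepsilon_{n,k}\,\zeta(3/2)}{h^2}\,\normi{f-\Snk f}=:\frac{C_{n,k}}{h^2}\normi{f-\Snk f}.
\end{equation*}
Substituting into \eqref{eq:inequality_modulus_kfunctional} and using $t\leq\tfrac12$, hence $t^2\leq\tfrac14$, gives
\begin{equation*}
  \omega_2(f,t)\leq\left(4+\frac{C_{n,k}}{4h^2}\right)\normi{f-\Snk f};
\end{equation*}
setting $M:=\bigl(4+\tfrac{C_{n,k}}{4h^2}\bigr)^{-1}>0$, which by construction depends only on $n$ and $k$ (recall $h=1/n$), completes the argument.

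I expect the main obstacle to be making the passage $m\to\infty$ fully rigorous: one has to verify both that the telescoped remainder $D^2\Snkpm f$ tends to zero in the supremum norm and that the infinite sum $\sum_{i\geq 1}D^2\Snkp{i}(f-\Snk f)$ converges in $\normi{\cdot}$. Everything here hinges on the summability $3/2>1$ furnished by Corollary \ref{cor:upper_bound_iterates}; had the decay been only $(m-1)^{-1}$ or slower, the series would diverge and the method would fail.
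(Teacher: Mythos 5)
Your proposal is correct and follows essentially the same route as the paper's own proof: telescoping $D^2 \Snk f$ through the iterates, bounding the first term via Lemma \ref{lem:bounded_derivative} (with the contraction property of $\Snk$), the remaining terms via Corollary \ref{cor:upper_bound_iterates}, summing the series to $\zeta(\frac{3}{2})$, and inserting the resulting bound into \eqref{eq:inequality_modulus_kfunctional}. If anything, you are slightly more careful than the paper, since you write the telescoping with a finite remainder and explicitly verify that $\normi{D^2 \Snkpm f} \to 0$ as $m \to \infty$, a step the paper's infinite telescopic expansion leaves implicit.
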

\begin{proof}
  We extend  $\normi{D^2 \Snk f}$ into a telescopic series:
  \begin{align*}
    \normi{D^2 \Snk f} &=  \normi{D^2 \Snk f-D^2 \Snkp{2} f+ D^2 \Snkp{2} f -D^2 \Snkp{3} f + \ldots}\\
    &\leq \sum_{m=1}^\infty \normi{D^2 \Snkpm (f-S_{n,k}f)}\\
    &= \normi{D^2 \Snk (f-S_{n,k}f)} + \sum_{m=2}^\infty \normi{D^2 \Snkpm (f-S_{n,k}f)}.
  \intertext{Then we apply Corollary \ref{cor:upper_bound_iterates} and Lemma \ref{lem:bounded_derivative} and obtain}
    \normi{D^2 \Snk f} &\leq \frac{4 d_k\normi{f-S_{n,k}f}}{h^2} + \sum_{m=1}^\infty \frac{2\varepsilon_{n,k}}{h^2\cdot m^{3/2}}\normi{f-\Snk f} \\
    &\leq \frac{4d_k + 2 \varepsilon_{n,k} \cdot \zeta(\frac{3}{2}) }{h^2}\,
    \normi{ f-\Snk f}.
  \end{align*}
  Finally, applying the above result to \eqref{eq:inequality_modulus_kfunctional} yields the estimate
  \begin{equation*}
    \omega_2(f,t) \leq \left(4 + \frac{t^2(4d_k + 2\varepsilon_{n,k}\cdot \zeta(\frac{3}{2}))}{h^2}\right)\normi{ f-\Snk f}
  \end{equation*}
\end{proof}

\begin{corollary}
  For $k \geq 3$, $n \to \infty$ and $f \in \spacecf$ the following uniform estimate holds:
  \begin{equation*}
    \omega_2(f, \delta) \leq 5 \cdot \normi{ f-\Snk f}.
  \end{equation*}
\end{corollary}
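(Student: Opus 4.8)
The plan is to read off the Corollary from the explicit constant produced in the proof of Theorem~\ref{thm:lower_bound}, choosing the argument $\delta$ of the modulus as a suitable multiple of the mesh gauge $h = 1/n$ and then passing to the limit $n \to \infty$. That proof gives, for every $0 < t \le \tfrac12$,
\[
  \omega_2(f,t) \le \Big(4 + \frac{t^2\big(4d_k + 2\varepsilon_{n,k}\,\zeta(\tfrac32)\big)}{h^2}\Big)\normi{f - \Snk f},
\]
so it suffices to force the bracketed factor below $5$, i.e.\ to make the second summand at most $1$.

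First I would record that neither the stability constant $d_k$ nor the quantity $\varepsilon_{n,k}$ grows with $n$. This is the essential point. By the shift invariance of Theorem~\ref{thm:shift_invariance}, for interior indices the values $\bspk{j}(\xi_{i,k})$ of the equidistant B-splines at the Greville nodes depend only on $i-j$ and on $k$, not on $n$; since the B-splines have compact support, for each $i$ only finitely many summands in the definition of $\varepsilon_{n,k}^2$ are nonzero. Hence $\varepsilon_{n,k}$ stabilises to a finite constant $\varepsilon_k$ depending only on $k$, and $d_k$ likewise depends only on $k$, so that $C_k := 4d_k + 2\varepsilon_k\,\zeta(\tfrac32)$ is a fixed positive number independent of $n$.

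Next I would set $\delta = \delta_n := h/\sqrt{C_k} = 1/(n\sqrt{C_k})$. With this choice $t^2 C_k/h^2 = 1$, the bracket equals exactly $5$, and the claimed inequality $\omega_2(f,\delta_n) \le 5\,\normi{f - \Snk f}$ follows. It only remains to verify admissibility of $\delta_n$ for the modulus: since $C_k$ is fixed, $\delta_n = O(1/n) \to 0$, hence $0 < \delta_n \le \tfrac12$ for all sufficiently large $n$, which is precisely the regime $n \to \infty$ asserted in the statement.

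The step I expect to be the main obstacle is the boundedness (indeed $n$-independence) of $\varepsilon_{n,k}$. With a genuinely $n$-dependent $\varepsilon_{n,k}$ the constant $C_k$ could blow up and the choice of $\delta$ would degrade. It is exactly the shift invariance of the uniform B-splines at the Greville nodes that pins $\varepsilon_{n,k}$ to a constant $\varepsilon_k$; this is where the equidistance of the knots is indispensable, and the argument would not survive for a general non-uniform mesh.
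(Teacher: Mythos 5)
Your proposal is correct and takes essentially the same route as the paper: the paper's proof likewise sets $\delta = h/\sqrt{4d_k + 2\varepsilon_{n,k}\,\zeta(\tfrac{3}{2})}$ so that the bracket from Theorem~\ref{thm:lower_bound} becomes exactly $5$, and concludes by noting that $h \to 0$ forces $\delta \to 0$, hence admissibility for large $n$. Your additional verification via Theorem~\ref{thm:shift_invariance} that $\varepsilon_{n,k}$ stabilises to an $n$-independent constant is left implicit in the paper; strictly speaking it is not needed for the bare inequality (since $d_k > 0$ alone gives $\delta \leq h/\sqrt{4d_k} \to 0$), but it does guarantee $\delta \sim h$, which is what makes the estimate meaningful.
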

\begin{proof}
  With 
  \begin{equation*}
    \delta = \frac{h}{\sqrt{(4d_k + 2\varepsilon_{n,k}\cdot \zeta(\frac{3}{2}))}},
  \end{equation*}
  the corollary follows, because for $n \to \infty$ we have that $h \to 0$ and hence, $\delta \to 0$.
\end{proof}

\begin{corollary}
   For $0 < t \leq \frac1{2}$ and $k \geq 3$, we have the equivalence 
  \begin{equation*}
    \omega_2(f, t) \sim \normi{f - \Snk f}
  \end{equation*}
  in the sense that there exist constants $M_1, M_2 > 0$ independent of $f$ and only depending on $n$ and $k$ such that
  \begin{equation*}
    M_1 \cdot \omega_2(f, t) \leq \normi{f - \Snk f} \leq M_2 \cdot \omega_2(f, t).
  \end{equation*}
\end{corollary}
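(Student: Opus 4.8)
The asserted equivalence splits into two one-sided bounds. The left-hand inequality $M_1\cdot\omega_2(f,t)\le\normi{f-\Snk f}$ is precisely Theorem~\ref{thm:lower_bound}, so one may take $M_1=M$. The plan is therefore to establish only the companion direct estimate $\normi{f-\Snk f}\le M_2\cdot\omega_2(f,t)$, with a constant $M_2$ depending on $n$ and $k$ alone (the value of $t$ being fixed). The strategy is to exploit the two structural features of $\Snk$ recorded in the introduction: by the partition of unity \eqref{eq:partition_unity} the operator is positive with $\normop{\Snk}=1$, and by \eqref{eq:reproduce_linear} it reproduces linear functions. These are exactly the hypotheses of the classical direct theorem for positive linear operators, and such a bound is already contained in the survey of direct inequalities \cite{Beutel:2002}.

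To keep the argument self-contained I would re-derive the direct bound through the $K$-functional, mirroring the $K$-functional approach in the proof of Theorem~\ref{thm:lower_bound}. For $g\in\spacedf{2}$, Taylor's formula with integral remainder combined with the reproduction of linear functions gives the pointwise estimate
\begin{equation*}
  \abs{\Snk g(x)-g(x)}\le\tfrac12\normi{D^2 g}\cdot\Snk\bigl((\cdot-x)^2\bigr)(x).
\end{equation*}
Writing $\mu_{n,k}:=\sup_{x\in\ivcc{0}{1}}\Snk\bigl((\cdot-x)^2\bigr)(x)$ for the supremal second moment and decomposing an arbitrary $f$ as $f=(f-g)+g$, the norm bound $\normop{\Snk}=1$ yields
\begin{equation*}
  \normi{f-\Snk f}\le 2\normi{f-g}+\tfrac12\mu_{n,k}\normi{D^2 g}.
\end{equation*}
Passing to the infimum over $g\in\spacedf{2}$ produces $\normi{f-\Snk f}\le 2K(f,\mu_{n,k}/4)$, and the equivalence of the $K$-functional with the second modulus \cite{Johnen:1976} converts this into $\normi{f-\Snk f}\le C\,\omega_2\bigl(f,\tfrac12\sqrt{\mu_{n,k}}\bigr)$ for an absolute constant $C$. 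A short computation with the Greville nodes, of the same flavour as in Theorem~\ref{thm:shift_invariance}, shows that for fixed degree $k$ only $O(k)$ nodes $\xi_{j,k}$ are active at any $x$ and that their spread is $O(kh)$, whence $\mu_{n,k}=O(k^2h^2)$; in particular the natural smoothness scale is comparable to the mesh $h$, and $\tfrac12\sqrt{\mu_{n,k}}\le\tfrac12$ always lies in the admissible range of $\omega_2$.

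It remains to transfer the estimate from the mesh scale $\tfrac12\sqrt{\mu_{n,k}}$ to the fixed argument $t$. Here the semi-additivity of the modulus enters: when $\tfrac12\sqrt{\mu_{n,k}}\le t$ monotonicity gives $\omega_2(f,\tfrac12\sqrt{\mu_{n,k}})\le\omega_2(f,t)$, and otherwise the scaling inequality $\omega_2(f,\lambda t)\le(\lambda+1)^2\omega_2(f,t)$, applied with $\lambda=\tfrac{\sqrt{\mu_{n,k}}}{2t}>1$ (both arguments remaining at most $\tfrac12$), yields the same conclusion up to the factor $(\lambda+1)^2$. In either case $\omega_2(f,\tfrac12\sqrt{\mu_{n,k}})\le C'(n,k)\,\omega_2(f,t)$, and hence the direct estimate holds with $M_2=C\cdot C'(n,k)$, completing the equivalence. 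I expect the only genuinely delicate point to be the quantitative control of the second moment $\mu_{n,k}$: while it is intuitively clear that the variance of the Schoenberg mass distribution is of order $h^2$ for fixed $k$, pinning down the constant explicitly and confirming the advertised dependence on $n$ and $k$ requires the uniform-knot bookkeeping already used above. Everything else is routine positive-operator machinery.
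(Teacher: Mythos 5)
Your proposal is correct, and it shares the paper's skeleton --- the lower bound is quoted verbatim from Theorem~\ref{thm:lower_bound} --- but for the direct (upper) bound you part ways with the paper. The paper's proof is a one-line citation of the explicit inequality $\normi{f-\Snk f} \leq \left(1 + \frac{1}{2t^2}\min\setof{\frac{1}{2k}, \frac{(k+1)H^2}{12}}\right)\omega_2(f,t)$ from \cite{Beutel:2002}, whose $\frac{1}{2t^2}$-factor already builds in the transfer between the mesh scale and the fixed argument $t$; you instead re-derive a direct estimate from scratch via the standard positive-operator machinery: Taylor expansion plus reproduction of linear functions \eqref{eq:reproduce_linear} to bound $\abs{\Snk g - g}$ by the second moment, the decomposition $f=(f-g)+g$ with $\normop{\Snk}=1$, the Johnen equivalence $K(f,t^2)\sim\omega_2(f,t)$, and the scaling law $\omega_2(f,\lambda t)\le(\lambda+1)^2\omega_2(f,t)$ to pass from $\tfrac12\sqrt{\mu_{n,k}}$ to $t$. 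All of these steps are sound, including near the boundary: since $\mathrm{supp\,}\bspk{j}\subset\ivcc{x_j}{x_{j+k+1}}$ and $\xi_{j,k}\in\ivcc{x_{j+1}}{x_{j+k}}$, any node active at $x$ satisfies $\abs{\xi_{j,k}-x}\le(k+1)h$, so the one point you leave as a sketch, $\mu_{n,k}\le(k+1)^2h^2$ together with the trivial bound $\mu_{n,k}\le 1$, is a two-line computation using the partition of unity \eqref{eq:partition_unity} --- and for this corollary any finite bound suffices anyway, since $M_2$ is allowed to depend on $n$ and $k$. What each route buys: the paper's citation yields the sharp second-moment constant $\min\setof{\frac{1}{2k},\frac{(k+1)H^2}{12}}$ and a fully explicit $M_2$ in one line; your argument is self-contained, exhibits the mechanism behind the cited inequality, and would apply to any positive linear operator reproducing linears, at the cost of a cruder moment bound ($O(k^2h^2)$ versus $\frac{(k+1)h^2}{12}$) and an unspecified absolute constant from the $K$-functional equivalence. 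One shared caveat, not a gap: your $M_2$, exactly like the paper's, also depends on $t$ (both proofs treat $t$ as fixed), so neither argument literally achieves the ``only depending on $n$ and $k$'' phrasing of the statement.
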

\begin{proof}
  We apply Theorem \ref{thm:lower_bound} to get the lower inequality and we use the inequality
  \begin{equation*}
    \normi{f - \Snk f} \leq 
    \left(1 + \frac1{2t^2}\cdot\min\setof{\frac{1}{2k},\, \frac{(k+1)H^2}{12}}\right)\cdot \omega_2(f, t),
  \end{equation*}
  from \cite{Beutel:2002} 
  to obtain the upper bound, where
  \begin{equation*}
    H := \max\cset{(x_{j+1} - x_j)}{j \in \setof{-k,\ldots,n-1}}.
  \end{equation*}  
\end{proof}

Consequently, we have proved that the conjecture stated in \cite{Beutel:2002} holds true under the conditions of
Theorem~\ref{thm:lower_bound}. Additionally, we note that in Corollary~3 we have the relation $d_k \sim 2^k$. Therefore, 
$\delta$ tends to zero also for $k \to \infty$. With this note, we finally conclude with the following related conjecture:
\begin{conjecture}
  For $n > 0$ fixed and $k \to \infty$, there exists $M > 0$ independent on $n$ and $k$ such that
  \begin{equation*}
    M \cdot \omega_2(f, \delta) \leq \normi{ f-\Snk f}.
  \end{equation*}
\end{conjecture}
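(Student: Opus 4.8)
The plan is to keep the proof skeleton of Theorem~\ref{thm:lower_bound} intact and to isolate precisely which step degenerates in the new regime. The $K$-functional inequality \eqref{eq:inequality_modulus_kfunctional} and the telescoping identity
\[
  D^2 \Snk f = \sum_{m=1}^\infty D^2 \Snkpm (f - \Snk f)
\]
are purely algebraic and remain valid for every $n$ and $k$; likewise Lemma~\ref{lem:bounded_derivative}, which controls the $m=1$ term by $\tfrac{4d_k}{h^2}\normi{f-\Snk f}$, rests only on the de~Boor stability \eqref{eq:stability_spline} and imposes no relation between $n$ and $k$. Consequently, once one produces a finite bound $\normi{D^2 \Snk f} \le \tfrac{C_{n,k}}{h^2}\normi{f-\Snk f}$, the choice $\delta^2 = h^2/C_{n,k}$ yields $\omega_2(f,\delta) \le 5\normi{f-\Snk f}$ with the constant $M=\tfrac15$ \emph{already independent of $n$ and $k$}. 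The entire content of the conjecture therefore reduces to establishing a \emph{finite} $C_{n,k}$ in the regime $n$ fixed, $k\to\infty$, together with $0<\delta\le\tfrac12$.

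The obstruction sits exactly in the iterate bound behind Corollary~\ref{cor:upper_bound_iterates}, whose derivation requires $x \in \ivcc{x_{2k+2}}{x_{n-2k-2}}$ and hence the non-degeneracy condition $2k+2 \le n-2k-2$, i.e.\ $n \ge 4k+4$. This fails for all large $k$ when $n$ is fixed, so the interior interval is empty and the shift-invariance of Theorem~\ref{thm:shift_invariance} is unavailable. Crucially, the quantity $\varepsilon_{n,k}$ is \emph{already} defined over the full index range $j=-k,\dots,n-1$ and needs no boundary correction, and the cancellation identities $\sum_{j=-k}^{n-1}\diffk \bspk{j}(\xi_i)=0$ and $\sum_{j=-k}^{n-1}\diffkm\diffk \bspk{j}(\xi_i)=0$ follow from the partition of unity \eqref{eq:partition_unity} and hold on all of $\ivcc{0}{1}$, so the cross terms in the Cauchy--Schwarz step still vanish. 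What is lost is \emph{solely} the summable factor $(m-1)^{-3/2}$, which originated from the $\tfrac{m(m-1)}{2}$ shift-invariant representations. Thus the only genuine gap is to recover the summability in $m$ of $\normi{D^2\Snkpm(f-\Snk f)}$ up to the endpoints.

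For this I would replace the convolution argument by an operator-theoretic one on the finite-dimensional space $\spacepp$. By \eqref{eq:partition_unity} and \eqref{eq:reproduce_linear}, $\Snk$ acts as a matrix fixing the reproduced linear functions (eigenvalue $1$); if one can show that this eigenvalue is semisimple and that the remaining spectrum has modulus strictly below $1$, then for $g := f - \Snk f \in \operatorname{im}(I-\Snk)$ the iterates $\Snkpm g$ decay geometrically in the (norm-equivalent, finite-dimensional) spline space. Since $D^2$ annihilates linear functions, this gives $\normi{D^2\Snkpm g}\le \tfrac{4d_k}{h^2}\,C\rho^{m}\normi{g}$ for some $\rho<1$, hence a finite $C_{n,k}$ for every fixed pair $(n,k)$. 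The relation $d_k\sim 2^k$ noted after the equivalence corollary then forces the natural bound $C_{n,k}\to\infty$, so $\delta=h/\sqrt{C_{n,k}}\to 0$ with $h=1/n$ fixed, and in particular $\delta\le\tfrac12$ for all large $k$, completing the reduction with $M=\tfrac15$.

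I expect the spectral gap to be the main obstacle. One must prove that $\Snk$ carries no unimodular eigenvalue other than the double eigenvalue $1$ and that $1$ is semisimple (no Jordan block), so that the geometric decay on $\operatorname{im}(I-\Snk)$ is genuine, \emph{and} that the resulting estimate survives in the supremum norm up to $0$ and $1$. This is precisely where shift-invariance was doing the heavy lifting in the $n\ge 4k+4$ case: for fixed $n$ and $k\to\infty$ essentially the entire interval is a boundary layer, so no interior localization is available and the decay of $\normi{D^2\Snkpm g}$ must be quantified uniformly against the clamped boundary B-splines. Establishing this boundary-aware contraction — rather than merely on a shrinking interior — is the genuinely new and hardest ingredient that the present techniques do not yet supply.
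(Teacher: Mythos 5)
You are addressing the paper's closing \emph{conjecture}: the authors give no proof of this statement, so there is nothing in the paper to match your argument against — and, more importantly, your proposal does not close the conjecture either. It is a reduction, not a proof, and both load-bearing steps are explicitly deferred. Concretely: (i) the spectral claim — that on $\spacepp$ the iteration matrix of $\Snk$ has $1$ as a semisimple eigenvalue whose eigenspace is exactly the reproduced linear part, with all remaining eigenvalues of modulus strictly below $1$ — appears only conditionally (``if one can show''); and (ii) the transfer of the resulting geometric decay of $\normi{D^2\Snkpm(f-\Snk f)}$ to the supremum norm up to the clamped boundary is labelled by you as ``the genuinely new and hardest ingredient that the present techniques do not yet supply.'' Without (i)--(ii) there is no finite $C_{n,k}$, hence no admissible $\delta$ and no $M$. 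There is also a slip you should repair: the telescoping step is \emph{not} ``purely algebraic.'' The finite telescoping gives
\begin{equation*}
  D^2\Snk f = D^2 \Snkp{M+1} f + \sum_{m=1}^{M} D^2\Snkpm(f-\Snk f),
\end{equation*}
and passing to the infinite series requires $\normi{D^2 \Snkp{M+1} f}\to 0$; in the paper this is exactly what Corollary~\ref{cor:upper_bound_iterates} supplies, and in your regime it is available only \emph{after} the spectral gap is established — so the step you declare ``valid for every $n$ and $k$'' is itself conditional on your open ingredient.

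On the positive side, your diagnosis of where the paper's machinery dies is exactly right: for $n$ fixed and $k$ large the window $\ivcc{x_{2k+2}}{x_{n-2k-2}}$ is empty, Theorem~\ref{thm:shift_invariance} yields no translates, and with it goes the averaging over the $\frac{m(m-1)}{2}$ representations — though note that your remark that the cross terms ``still vanish'' is then moot, since without the multiple representations there is no averaged square to expand, and the entire $\varepsilon_{n,k}$ bound is lost, not merely the factor $(m-1)^{-3/2}$. Two remarks on feasibility. First, your boundary worry is softer than you fear: since $C_{n,k}$ may depend on $(n,k)$ arbitrarily (it only moves $\delta$), equivalence of norms on the finite-dimensional space $\spacepp$, via the stability inequality \eqref{eq:stability_spline}, transfers any spectral decay to $\normi{\cdot}$ at the cost of an $(n,k)$-dependent constant, which your reduction tolerates. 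Second, the spectral claim itself looks provable by standard absorbing-chain arguments rather than being the deep obstacle you suggest: the matrix $A_{ij}=\bspk{j}(\xi_{i,k})$ is row-stochastic by \eqref{eq:partition_unity}, fixes the node vector $(\xi_{i,k})_i$ by \eqref{eq:reproduce_linear}, and has two absorbing indices because $\xi_{-k,k}=0$, $\xi_{n-1,k}=1$ and $\bspk{j}(0)=\delta_{j,-k}$, $\bspk{j}(1)=\delta_{j,n-1}$ (the latter in the limit defining $\Snk f(1)$); once one checks from the overlap of B-spline supports that every interior index communicates with an endpoint, the transient block $Q$ satisfies $\rho(Q)<1$, the characteristic polynomial factors as $(\lambda-1)^2\det(\lambda I - Q)$, and $1$ is semisimple of multiplicity two, with $D^2$ annihilating the corresponding linear part. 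Carrying this out, together with the remainder-term justification above, would convert your program into an actual proof (with your $\delta = h/\sqrt{C_{n,k}} \le \frac12$ for large $k$ and $M=\frac15$); as submitted, it is a credible plan whose two central lemmas are missing.
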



\bibliographystyle{elsarticle-num}
\bibliography{references}

\end{document}